\newtheorem{PROP}{Proposition}[section]
\newtheorem{THM}[PROP]{Theorem}
\newtheorem{LM}[PROP]{Lemma}
\newtheorem{COR}[PROP]{Corollary}
\theoremstyle{definition}
\newtheorem{DEF}[PROP]{Definition}
\newtheorem{EXA}[PROP]{Example}
\newtheorem{REM}[PROP]{Remark}
\newtheorem{PROB}[PROP]{Problem}
\newcommand{\uv}[1]{``#1''}
\newcommand{\abs}[1]{\lvert#1\rvert}
\newcommand{\ol}[1]{\ensuremath{\overline{#1}}}
\newcommand{\ul}[1]{\ensuremath{\underline{#1}}}
\newcommand{\limti}[1]{\lim\limits_{#1\to\infty}}
\newcommand{\dcc}[1]{\ensuremath{\lfloor #1 \rfloor}} 
\newcommand{\enu}{\renewcommand{\theenumi}{\roman{enumi}}\renewcommand{\labelenumi}{{\rm(\theenumi)}}}
\newcommand{\RR}{\ensuremath{\mathbb R}\xspace}
\newcommand{\NN}{\ensuremath{\mathbb N}\xspace}
\newcommand{\ve}{\ensuremath{\varepsilon}\xspace}
\newcommand{\vr}{\ensuremath{\varrho}\xspace}
\newcommand{\intrv}[2]{\ensuremath{[#1,#2]}}
\newcommand{\intrvr}[2]{\ensuremath{]#1,#2]}}
\newcommand{\intrvo}[2]{\ensuremath{]#1,#2[}}
\newcommand{\post}[2]{\ensuremath{(#1)_{#2=1}^\infty}}
\newcommand{\alp}[1]{\alpha(#1)}
\newcommand{\bet}[1]{\beta(#1)}
\begin{document}
\title{Gaps and the exponent of convergence of an integer sequence}
\author{Georges Grekos}
\address{Universit\'e Jean Monnet, Facult\'e des Sciences et Techniques, D\'epartement de Math\'ematiques, 23 Rue du Docteur Paul Michelon, 42023 Saint-Etienne Cedex 2, France}
\email{\tt grekos@univ-st-etienne.fr}
\author{Martin Sleziak}
\author{Jana Tomanov{\'a}}
\address{Department of Algebra, Geometry and Mathematical Education, Faculty of Mathematics, Physics and Informatics, Comenius University, Mlynsk\'a dolina, 842 48 Bratislava, Slovakia}
\email{\tt sleziak@fmph.uniba.sk, tomanova@fmph.uniba.sk}

\begin{abstract}
Professor Tibor \v{S}al\'at, at one of his seminars at Comenius
University, Bratislava, asked to study the influence of gaps
of an integer sequence $A=\{a_1<a_2<\dots<a_n<\dots\}$ on its
exponent of convergence. The exponent of convergence of $A$
coincides with its upper exponential density. In this paper we
consider an extension of Professor \v{S}al\'at's question and we
study the influence of the sequence of ratios
$\post{\frac{a_m}{a_{m+1}}}m$ and of the sequence
$\post{\frac{a_{m+1}-a_m}{a_{m}}}m$ on the upper and on the lower
exponential densities of $A$.
\end{abstract}
\maketitle
\section{Introduction}
The concept of exponent of convergence is introduced in
\cite{POLYASZEGO1NEW}. The authors of this treatise proved that
for any real sequence $r=\post {r_n}n$, $0<r_1 \leq r_2 \leq
\ldots \leq r_n \leq \ldots,$ with $\limti n r_n = +\infty$, there
exists $\tau\in\intrv 0{+\infty}$, such that the series
$\sum\limits_{n=1}^\infty r_n^{-\sigma}$ is convergent whenever
$\sigma>\tau$ and divergent whenever $\sigma<\tau$ (\cite[Part I,
Exercises 113,114]{POLYASZEGO1NEW}). The number $\tau$ is called
the \emph{exponent of convergence} of the sequence $r$ and denoted
by $\tau(r)$. The exponent of convergence of real non-decreasing
sequences was also studied in
\cite{KOSTYR1979,KOSSAL1982,SALAT1984}.
It was proved by P\'olya and Szeg\"o \cite[Part I, Exercises 113,114]{POLYASZEGO1NEW}
that $\tau(r)$ can be calculated by the formula
\begin{equation}\label{EQD2}
\tau(r)=\limsup_{n\to\infty} \frac{\log n}{\log r_n}.
\end{equation}

In particular, if $r$ is an \emph{integer sequence} $A=\{a_1<a_2<\ldots<a_n<\ldots\}$
(that is, $A$ is an infinite subset of
$\NN=\{1,2,\dots\}$), then $A$ has an exponent of convergence $\tau(A) \in \intrv01$.

This simple observation indicates that when dealing with sequences of positive integers,
then the exponent of convergence could be related to the number-theoretic densities.

We recall the notion of exponential density  \cite{GREDENSSURV,LORENTZ}.

\begin{DEF}
The \emph{upper} and \emph{lower exponential densities} of an infinite subset
$A$ of $\NN$ are defined by
\begin{gather*}
\ol\ve(A)=\limsup_{k\to\infty} \frac{\log A(k)}{\log k},\\
\ul\ve(A)=\liminf_{k\to\infty} \frac{\log A(k)}{\log k},
\end{gather*}
where $A(x)$ denotes $\abs{A\cap\intrv1x}$.

If $\ol\ve(A)=\ul\ve(A)$, then we say that $A$ has the
exponential density $\ve(A)=\ol\ve(A)=\ul\ve(A)$.
\end{DEF}

One can easily see that, for an infinite subset
$A=\{a_1<a_2<\ldots<a_n<\ldots\}$ of $\NN$,
we have $\ol\ve(A)=\tau(A)=\limsup_{n\to\infty} \frac{\log n}{\log a_n}$, and
$\ul\ve(A)=\liminf_{n\to\infty} \frac{\log n}{\log a_n}$.



The purpose of this paper is the investigation of the influence of gaps
$$g_n=a_{n+1}-a_n$$
in the set $A=\{a_1<a_2<\dots<a_n<\dots\}\subseteq\NN$ on its exponent of convergence.
This study was suggested to us by the late
Professor Tibor \v{S}al\'at.
We will be also concerned with a slightly more general question about
the influence of gaps in $A$ on both exponential densities.

\section{Ratios of consecutive terms and exponent of convergence}

We are interested in the influence of gaps
$$g_n=a_{n+1}-a_n$$
in the set $A$ on the exponent of convergence. Since a gap of given length
has less influence if it is situated far from the origin, we might expect that (at least to
some extent) we can describe the behavior of the exponent of convergence in terms of the
asymptotic behavior of the fractions
$$\frac{g_n}{a_{n+1}}\qquad\text{ or }\qquad \frac{a_n}{a_{n+1}}.$$
Note that $\frac{a_n}{a_{n+1}}+\frac{g_n}{a_{n+1}}=1$.

\begin{DEF}
We define the upper and the lower \emph{limit ratios} of $A$ by
\begin{gather*}
\ol\vr(A)=\limsup_{n\to\infty} \frac{a_n}{a_{n+1}},\\
\ul\vr(A)=\liminf_{n\to\infty} \frac{a_n}{a_{n+1}}.
\end{gather*}
\end{DEF}

We remark that several related concepts have been studied in various contexts. The gap density $\lambda
(A)=\limsup\frac{a_{n+1}}{a_n}$ was introduced in \cite{GREVOL} to study properties of the
density set $D(A)=\{(\ul d(B),\ol d(B)); B\subseteq A\}$ of $A$, and further studied in
\cite{GRESATOGAPS}. Clearly $\ul\vr(A)=\frac1{\lambda(A)}$ (using the convention $\frac1\infty=0$).

%
%
%
%

The sets $A$ with $\ol\vr(A)=0$ (called thin sets) play a role in
the study of measures which can be regarded as certain extensions of
asymptotic density \cite{BFPR}.
The sets with $\ol\vr(A)<1$ (called almost thin sets) are studied in
connection with some ultrafilters on $\NN$ \cite{FLASTHIN}.

First we show that the exponent of convergence of any set $A\subseteq\NN$ with $\ol\vr(A)<1$ is equal to zero.
We will need the following well-known result (see e.g.~\cite[Problem 2.3.11]{KACZORNOWAK1},
\cite{STOLZ}):
\begin{THM}[Stolz Theorem]\label{THMSTOLZ}
Let $\post{x_n}n$, $\post{y_n}n$ be sequences of real numbers such
that $\post{y_n}n$ is strictly increasing, unbounded and
$$\limti n\frac{x_{n+1}-x_{n}}{y_{n+1}-y_{n}} = g,$$
$g\in\RR$. Then
$$\limti n \frac{x_n}{y_n}=g.$$
\end{THM}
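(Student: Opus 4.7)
The approach is the classical telescoping / Ces\`aro-style argument. The heuristic is that the hypothesis forces $x_{k+1}-x_k$ to be close to $g(y_{k+1}-y_k)$ for all large $k$; summing this from a fixed index $N$ up to $n-1$ gives $x_n - x_N \approx g(y_n - y_N)$, and then dividing by $y_n$ (which tends to $+\infty$) yields $x_n/y_n\to g$.

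Concretely, for an arbitrary $\varepsilon>0$ I would first choose $N$ so large that for every $k\ge N$,
$$|x_{k+1}-x_k-g(y_{k+1}-y_k)|\le\varepsilon\,(y_{k+1}-y_k),$$
using the strict positivity $y_{k+1}-y_k>0$ that comes from monotonicity of $(y_n)$ to move the denominator in the hypothesis to the right-hand side. Summing these inequalities over $k=N,\ldots,n-1$ and applying the triangle inequality, the left side telescopes to $|x_n-x_N-g(y_n-y_N)|$, while the right side collapses to $\varepsilon(y_n-y_N)$. Dividing through by the (eventually positive) quantity $y_n$ gives
$$\left|\frac{x_n}{y_n}-g\right|\le\varepsilon\,\frac{y_n-y_N}{y_n}+\frac{|x_N-g y_N|}{y_n}\le\varepsilon+\frac{|x_N-g y_N|}{y_n}.$$
Letting $n\to\infty$ and using $y_n\to+\infty$ to kill the last term, I obtain $\limsup_n|x_n/y_n-g|\le\varepsilon$. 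Since $\varepsilon$ was arbitrary the conclusion $x_n/y_n\to g$ follows.

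The only delicate point is the summation step: one must sum the \emph{bounds} and invoke the triangle inequality rather than summing signed quantities naively, and the positivity of the increments $y_{k+1}-y_k$ is exactly what makes the right-hand sums collapse cleanly into $\varepsilon(y_n-y_N)$ without sign issues. The unboundedness of $(y_n)$ is then used only to eliminate the fixed boundary contribution $(x_N-g y_N)/y_n$ at the end. Both hypotheses enter precisely where the argument needs them, so no serious obstacle arises beyond careful bookkeeping of the error terms.
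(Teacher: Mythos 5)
Your proof is correct and is essentially the argument the paper relies on: the paper states Theorem \ref{THMSTOLZ} without proof (citing the literature), but its proof of the more general Lemma \ref{LMSTOLZ} is exactly the one-sided version of your telescoping estimate, so the approaches coincide. The only cosmetic remark is that the intermediate bound $\varepsilon\frac{y_n-y_N}{y_n}\le\varepsilon$ tacitly assumes $y_N\ge 0$; since $y_n\to+\infty$ you may simply choose $N$ with $y_N>0$ (or pass directly to the $\limsup$, as your final step already does), so nothing is lost.
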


\begin{PROP}\label{PROPVRTAU}
If $\ol\vr(A)<1$, then $\tau(A)=0$.
\end{PROP}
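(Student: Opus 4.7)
The plan is to apply Stolz's theorem (Theorem~\ref{THMSTOLZ}) to the sequences $x_n=\log n$ and $y_n=\log a_n$, which naturally appear in the formula $\tau(A)=\limsup_{n\to\infty}\frac{\log n}{\log a_n}$.

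First I would verify the hypotheses of the Stolz theorem: since $A\subseteq\NN$ and $a_n$ is strictly increasing with $a_n\to\infty$, the sequence $y_n=\log a_n$ is strictly increasing and unbounded. Next I would compute $x_{n+1}-x_n=\log(1+1/n)$, which tends to $0$, and $y_{n+1}-y_n=\log(a_{n+1}/a_n)=-\log(a_n/a_{n+1})$.

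The key step is extracting from the assumption $\ol\vr(A)<1$ a uniform positive lower bound for $y_{n+1}-y_n$ for all sufficiently large $n$. Pick any $q$ with $\ol\vr(A)<q<1$; then by definition of $\limsup$ there exists $N$ such that $\frac{a_n}{a_{n+1}}<q$ for all $n\geq N$, hence $y_{n+1}-y_n>-\log q>0$ for those $n$. Consequently, for $n\geq N$,
\begin{equation*}
0\leq \frac{x_{n+1}-x_n}{y_{n+1}-y_n}\leq \frac{\log(1+1/n)}{-\log q}\xrightarrow[n\to\infty]{}0.
\end{equation*}

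By the Stolz theorem, $\lim_{n\to\infty}\frac{\log n}{\log a_n}=0$, so in particular the $\limsup$ equals $0$ and thus $\tau(A)=0$. I do not expect any real obstacle; the only point requiring care is the transition from the $\limsup$ hypothesis $\ol\vr(A)<1$ to a uniform lower estimate on $\log(a_{n+1}/a_n)$, which is handled by choosing a strict upper bound $q$ between $\ol\vr(A)$ and $1$. (A self-contained alternative, avoiding Stolz, would be to note that $a_{n+1}\geq a_n/q$ for $n\geq N$ gives the geometric estimate $a_n\geq a_N\cdot q^{-(n-N)}$, whence $\log a_n$ grows at least linearly in $n$ and $\log n/\log a_n\to 0$; but the Stolz route is cleaner and matches the hint supplied by Theorem~\ref{THMSTOLZ}.)
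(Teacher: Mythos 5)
Your proof is correct and follows essentially the same route as the paper: apply the Stolz theorem to $x_n=\log n$, $y_n=\log a_n$, using $\ol\vr(A)<1$ to bound $\log\frac{a_{n+1}}{a_n}$ away from zero so that the difference quotient tends to $0$. Your explicit choice of $q$ with $\ol\vr(A)<q<1$ just spells out the step the paper states more briefly.
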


\begin{proof}
We will apply Theorem \ref{THMSTOLZ} to the sequences $x_n=\log n$
and $y_n=\log a_n$. Clearly, $y_n$ is strictly increasing and
unbounded. Note that $\limti n (x_{n+1}-x_n)=\limti n
\log\frac{n+1}n = 0$. Therefore
\begin{equation}\label{EQRATIO1}
 \limti n\frac{x_{n+1}-x_{n}}{y_{n+1}-y_{n}}=\limti n \frac{\log\frac{n+1}n}{\log\frac{{a_{n+1}}}{{a_n}}}=0,
\end{equation}
whenever $\log{\frac{a_{n+1}}{a_n}}$ is bounded from zero.
Thus the assumption $\ol\vr(A)<1$ is sufficient to infer this.

From (\ref{EQRATIO1}) we get $$\limti n \frac{\log n}{\log a_n}=0$$ by Stolz theorem. Thus $\tau(A)=0$.
\end{proof}

It remains only to analyse the case $\ol\vr(A)=1$.
The following examples show that
in this case nothing can be said about $\tau(A)$ in general.

\begin{EXA}\label{EXA1}
Let $a\in\intrvr01$, and let $A=\{\dcc{n^{\frac1a}}; n\in\NN\}$. Then
$\vr(A)=1$ and $\ve(A)=a$.
\end{EXA}

\begin{EXA}\label{EXA2}
Let $A=\{2^n;n\in\NN\}\cup\{2^{2^N}+1;N\in\NN\}$. Then
$\ol\vr(A)=1$ and $\ve(A)=0$.
\end{EXA}

\begin{EXA}\label{EXA3}
Let $A=\{a_n=\dcc{u_n}; n\ge 1,
u_n=\left(1+\frac1{\sqrt{n}}\right)^n\}$. Then $\vr(A)=1$ and
$\ve(A)=0$.
\end{EXA}

\begin{proof}
First observe that $u_n$ tends to infinity as $n\to\infty$, since
$$\limti n \log u_n= \limti n n \log \left(1+\frac1{\sqrt{n}}\right)=+\infty.$$
This yields that $a_n\sim u_n$ and $\log a_n \sim \log u_n$.

We have
$$\limti n \frac{\log n}{\log a_n}=\limti n \frac{\log n}{\log u_n}= \limti n \frac{\log n}{n\log \left(1+\frac1{\sqrt{n}}\right)}=0$$
and so $\tau(A)=\ve(A)=0$.

We will show that $\limti n \frac{u_n}{u_{n+1}}=1$, which implies $\vr(A)=1$.
Note that
$$\frac{u_n}{u_{n+1}}=\frac{t_n}{1+\frac1{\sqrt{n+1}}},$$
where
$$t_n=\left(\frac{1+\frac1{\sqrt{n}}}{1+\frac1{\sqrt{n+1}}}\right)^n.$$
So it is sufficient to show that $\limti n t_n = 1$
or equivalently $\limti n \ln t_n=0$.

Indeed
\begin{multline*}
\ln t_n=
n\left(\ln\left(1+\frac1{\sqrt{n}}\right)-\ln\left(1+\frac1{\sqrt{n+1}}\right)\right)
=\\=
n\left(\frac1{\sqrt{n}}-\frac1{2n}(1+o(1))-\frac1{\sqrt{n+1}}+\frac1{2(n+1)}(1+o(1))\right)
=\\=
\frac{n}{\sqrt{n}\sqrt{n+1}(\sqrt{n}+\sqrt{n+1})}+\frac12\left(\frac{n}{n+1}-1\right)+o(1)
\end{multline*}
tends to $0$ as $n\to\infty$.

Thus it remains to prove that $u_{n+1}-u_n>1$ for all sufficiently large $n$,
and hence the elements (integers) $a_n$ of the set $A$ are pairwise
distinct for $n$ large enough. We shall prove that $\limti n
(u_{n+1}-u_n)=+\infty$ 
using the function
$$f(x)=\left(1+\frac1{\sqrt{x}}\right)^x=e^{x\ln\left(1+\frac1{\sqrt{x}}\right)}.$$
An easy computation gives
\begin{equation}\label{EQNOTES1}
    f'(x)=f(x)\left(\ln\left(1+\frac1{\sqrt{x}}\right)-\frac1{2(\sqrt{x}+1)}\right)\ge \frac{f(x)}{2(\sqrt{x}+1)}.
\end{equation}
From the inequality $\left(1+\frac1t\right)^t\ge 2$, which is
valid for $t\ge1$, we get $f(x)\ge 2^{\sqrt{x}}$ and
\begin{equation}\label{EQNOTES1b}
    f'(x)\ge \frac{2^{\sqrt{x}}}{2(\sqrt{x}+1)}
\end{equation}
for $x\ge1$.
This implies that $\limti x f'(x)=+\infty$ and
$$\limti n (u_{n+1}-u_n)=\limti n (f(n+1)-f(n))=+\infty$$
by the mean value theorem.
\end{proof}

The above examples suggest the following question:
\begin{PROB}\label{PROBEPSRHO}
Given $a,b,c\in\intrv01$, $a\leq b$, does there exist
a subset $A\subseteq\NN$ such that $\ul\ve(A)=a$, $\ol\ve(A)=b$,
$\ul\vr(A)=c$ and $\ol\vr(A)=1$?
\end{PROB}


\section{Rate of proximity of $\frac{a_n}{a_{n+1}}$ to $1$ and the exponential
densities}\label{SECTIONGAPS}

The following three examples provide a motivation for the questions studied in this section.

\begin{EXA}\label{EXAG}
Let $a_n=\alpha b^n(1+o(1))$, for some $\alpha>0$ and $b>1$ (geometric-like sequence). Then
$\tau(A)=0$ and
$$\limti n\frac{g_n}{a_{n+1}}=\limti n 1-\frac{a_n}{a_{n+1}}=1-\frac1b\in \intrvo 01.$$
\end{EXA}

\begin{EXA}\label{EXAA}
If $a_n=k+ln$ is an arithmetic sequence then $\tau(A)=1$ and
$$\frac{g_n}{a_{n+1}}=\frac1{n}(1+o(1)).$$
\end{EXA}

\begin{EXA}\label{EXAP}
If $a_n=k+ln+tn^2$, then $\tau(A)=\frac12$ and
$$\frac{g_n}{a_{n+1}}=\frac2{n}(1+o(1)).$$

More generally, for any real $d>1$ and any integer sequence
$\post{a_n}n$ satisfying $a_n=tn^d(1+o(1))$ we get
$\tau(A)=\frac1d$ and
$$\frac{g_n}{a_{n+1}}=\frac{d}{n}(1+o(1)).$$
Thus for a real number $d>0$ and $\post{a_n}n$ as above, we have
$\tau(A)=\max\{1,\frac1d\}$.
\end{EXA}

The following corollary is a straightforward consequence of Proposition \ref{PROPVRTAU}.
\begin{COR}\label{COR3}
If $\liminf\limits_{n\to\infty} \frac{g_n}{a_{n+1}}>0$ then
$\tau(A)=\ve(A)=0$.
\end{COR}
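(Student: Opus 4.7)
The plan is to reduce directly to Proposition \ref{PROPVRTAU} via the identity $\frac{a_n}{a_{n+1}} + \frac{g_n}{a_{n+1}} = 1$ recorded at the start of Section~2. Set $\delta := \liminf_{n\to\infty} \frac{g_n}{a_{n+1}}$; by hypothesis $\delta > 0$. Since $\frac{a_n}{a_{n+1}} = 1 - \frac{g_n}{a_{n+1}}$, the standard identity $\limsup(1-x_n) = 1 - \liminf x_n$ converts the liminf lower bound on the gap ratios into a limsup upper bound on the consecutive ratios:
\[ \ol\vr(A) = \limsup_{n\to\infty} \frac{a_n}{a_{n+1}} = 1 - \liminf_{n\to\infty} \frac{g_n}{a_{n+1}} = 1 - \delta < 1. \]

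With $\ol\vr(A) < 1$ in hand, Proposition \ref{PROPVRTAU} immediately delivers $\tau(A) = 0$. It remains to upgrade this to the equality $\ve(A) = 0$, i.e.~to verify that both $\ul\ve(A)$ and $\ol\ve(A)$ vanish. The observation following the definition of exponential density gives $\tau(A) = \ol\ve(A)$, so $\ol\ve(A) = 0$. Since $A$ is an infinite subset of $\NN$, $A(k) \geq 1$ for all sufficiently large $k$, whence $\log A(k) \geq 0$ and therefore $\ul\ve(A) \geq 0$. Combined with $\ul\ve(A) \leq \ol\ve(A) = 0$, this forces $\ul\ve(A) = 0$, so $\ve(A)$ exists and equals~$0$.

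There is no real obstacle here: the content is genuinely corollary-level. The only minor care needed is to note that the hypothesis is phrased in terms of $\liminf$ of the gap fractions but Proposition \ref{PROPVRTAU} is phrased in terms of $\limsup$ of the ratios, so one must flip via the $1-x$ relation; and to remember that the conclusion asks for $\ve(A) = 0$ rather than merely $\tau(A) = 0$, so a one-line sanity check on the lower exponential density is required.
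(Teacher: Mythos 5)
Your proof is correct and follows exactly the route the paper intends: the corollary is stated there as a straightforward consequence of Proposition \ref{PROPVRTAU}, obtained precisely by the identity $\frac{a_n}{a_{n+1}}=1-\frac{g_n}{a_{n+1}}$, which turns the liminf hypothesis into $\ol\vr(A)<1$. The closing remarks about $\tau(A)=\ol\ve(A)$ and $\ul\ve(A)\ge 0$ are the same routine observations the paper leaves implicit.
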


This can be improved as follows.

\begin{PROP}\label{PROP3}
If $\liminf\limits_{n\to\infty} \frac{g_n}{a_{n}}>0$ then
$\tau(A)=\ve(A)=0$.
\end{PROP}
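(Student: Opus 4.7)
The plan is to reduce the new hypothesis to the one already handled by Proposition \ref{PROPVRTAU}, thereby bypassing any fresh invocation of Stolz's theorem. The key observation is that the ratio $\frac{g_n}{a_n}$ and the quantity $\frac{a_n}{a_{n+1}}$ are tightly linked by the identity
\[
\frac{a_{n+1}}{a_n}=1+\frac{g_n}{a_n},
\]
so that a positive lower bound on $\frac{g_n}{a_n}$ translates directly into an upper bound strictly less than $1$ on $\frac{a_n}{a_{n+1}}$.

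Concretely, I would set $c:=\liminf_{n\to\infty}\frac{g_n}{a_n}>0$ and fix some $c'\in\intrvo 0c$. Then for all sufficiently large $n$ we have $\frac{g_n}{a_n}\geq c'$, hence $\frac{a_{n+1}}{a_n}\geq 1+c'$, and therefore
\[
\frac{a_n}{a_{n+1}}\leq\frac{1}{1+c'}<1.
\]
Passing to the $\limsup$, this yields $\ol\vr(A)\leq\frac{1}{1+c'}<1$.

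At this point Proposition \ref{PROPVRTAU} applies directly and gives $\tau(A)=0$. Since the paper has already identified $\tau(A)$ with $\ol\ve(A)$, and since $0\leq\ul\ve(A)\leq\ol\ve(A)=0$, both exponential densities coincide and equal $0$, so $\ve(A)$ exists and equals $0$. There is no real obstacle: the content is entirely in recognizing that the hypothesis on $\frac{g_n}{a_n}$ is just a rewriting of the hypothesis $\ol\vr(A)<1$ already treated.
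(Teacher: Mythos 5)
Your argument is correct, but it is not the route the paper takes. You reduce the statement to Proposition \ref{PROPVRTAU}: from $\frac{g_n}{a_n}\ge c'$ for large $n$ you get $\frac{a_n}{a_{n+1}}=\frac{1}{1+g_n/a_n}\le\frac{1}{1+c'}<1$, hence $\ol\vr(A)<1$ and $\tau(A)=0$; since $0\le\ul\ve(A)\le\ol\ve(A)=\tau(A)=0$, also $\ve(A)=0$. The paper instead argues directly and elementarily: from $\frac{a_{n+1}}{a_n}\ge 1+\delta$ eventually it deduces $\log a_n\ge \log c+n\log(1+\delta)$, so $\frac{\log n}{\log a_n}\to 0$; this is self-contained (it does not route back through the Stolz-type argument underlying Proposition \ref{PROPVRTAU}) and gives the quantitative information that $a_n$ grows at least geometrically. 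Your reduction is shorter, and it has an interesting side effect worth noting: it shows that the hypothesis $\liminf_{n\to\infty}\frac{g_n}{a_n}>0$ is in fact \emph{equivalent} to $\ol\vr(A)<1$, and hence also to the hypothesis $\liminf_{n\to\infty}\frac{g_n}{a_{n+1}}>0$ of Corollary \ref{COR3} (since $\frac{g_n}{a_{n+1}}=\frac{g_n/a_n}{1+g_n/a_n}$ and $x\mapsto\frac{x}{1+x}$ is increasing); so the ``improvement'' of Proposition \ref{PROP3} over Corollary \ref{COR3} is one of formulation rather than of logical strength. The only small points to keep straight are that the bound $\frac{g_n}{a_n}\ge c'$ holds only for sufficiently large $n$ (which suffices for the $\limsup$ conclusion, as you use it) and that the final identification $\tau(A)=\ol\ve(A)$ is indeed available from the introduction.
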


\begin{proof}
The hypothesis $\liminf\limits_{n\to\infty} \frac{g_n}{a_{n}}>0$
guarantees that there exists a $\delta>0$ such that
$\frac{g_n}{a_n}\ge\delta$ for each $n$. This implies
$$\frac{a_{n+1}}{a_n} = 1+\frac{g_n}{a_n} \ge 1+\delta.$$
Hence
$$\log a_n \ge \log c + n \log (1+\delta), \text{ for some constant }c.$$
From this we get
$$0 \le \limti n \frac{\log n}{\log a_n} \le \limti n \frac{\log
n}{\log c+n\log(1+\delta)} = 0.$$
\end{proof}

Examples \ref{EXAG}, \ref{EXAA} and \ref{EXAP} show
that there is a relation between the exponent of convergence and
the limit behavior of $\frac{g_n/a_{n+1}}{1/n}$. This is generalized in Proposition
\ref{PROPSTOLZMS}.

We will need a slightly more general form of Stolz Theorem. 
For the sake of completeness we include the proof of this result.
\begin{LM}\label{LMSTOLZ}
Let $\post{x_n}n$ and $\post{y_n}n$ be sequences of positive real
numbers such that $\post{y_n}n$ is strictly increasing and
unbounded. Then
$$
\liminf_{n\to\infty}\frac{x_{n+1}-x_n}{y_{n+1}-y_n}\le
\liminf_{n\to\infty}\frac{x_n}{y_n}\le
\limsup_{n\to\infty}\frac{x_n}{y_n}\le
\limsup_{n\to\infty}\frac{x_{n+1}-x_n}{y_{n+1}-y_n}.
$$
\end{LM}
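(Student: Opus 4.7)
The plan is to give the standard Stolz-type argument (telescoping differences, then dividing by $y_m$), but separately for the two nontrivial inequalities. The middle inequality $\liminf \frac{x_n}{y_n}\le\limsup\frac{x_n}{y_n}$ is free. The positivity of $\post{x_n}n$ does not really enter; what matters is that $\post{y_n}n$ is strictly increasing and unbounded, so $y_{n+1}-y_n>0$ and $y_n\to+\infty$.

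For the right-hand inequality, I would set $L=\limsup_{n\to\infty}\frac{x_{n+1}-x_n}{y_{n+1}-y_n}$ and assume $L<+\infty$ (otherwise the inequality is vacuous). Given $\varepsilon>0$, there is $N$ with $x_{n+1}-x_n<(L+\varepsilon)(y_{n+1}-y_n)$ for all $n\ge N$. Summing from $N$ to $m-1$ (telescoping on both sides) yields
\[
x_m-x_N<(L+\varepsilon)(y_m-y_N),\qquad m>N,
\]
and dividing by $y_m$ gives
\[
\frac{x_m}{y_m}<(L+\varepsilon)+\frac{x_N-(L+\varepsilon)y_N}{y_m}.
\]
Since $y_m\to+\infty$, the second term tends to $0$, so $\limsup_{m\to\infty}\frac{x_m}{y_m}\le L+\varepsilon$, and letting $\varepsilon\to 0^+$ finishes this direction.

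For the left-hand inequality I would argue symmetrically. Set $\ell=\liminf_{n\to\infty}\frac{x_{n+1}-x_n}{y_{n+1}-y_n}$, assume $\ell>-\infty$ (otherwise nothing to prove), pick $\varepsilon>0$, and choose $N$ with $x_{n+1}-x_n>(\ell-\varepsilon)(y_{n+1}-y_n)$ for $n\ge N$. The same telescoping, now with the reverse inequality, gives
\[
\frac{x_m}{y_m}>(\ell-\varepsilon)+\frac{x_N-(\ell-\varepsilon)y_N}{y_m},
\]
whence $\liminf_{m\to\infty}\frac{x_m}{y_m}\ge\ell-\varepsilon$; let $\varepsilon\to 0^+$.

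There is no real obstacle here; the only point that needs a moment of care is the bookkeeping of the sign of $\varepsilon$ in the two directions and the observation that $y_{n+1}-y_n>0$ (from strict monotonicity) lets us clear denominators without flipping inequalities. The hypothesis that $y_n\to+\infty$ is what makes the boundary term $(x_N-(L\pm\varepsilon)y_N)/y_m$ negligible in the limit.
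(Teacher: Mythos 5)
Your proof is correct and follows essentially the same route as the paper: fix the one-sided bound on the difference quotients beyond some index, telescope, divide by $y_m$, and use $y_m\to+\infty$ to kill the boundary term (the paper writes out the $\liminf$ direction and calls the other analogous, while you spell out both). No issues.
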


\begin{proof}
Put $l=\liminf\limits_{n\to\infty}
\frac{x_{n+1}-x_n}{y_{n+1}-y_n}$. Given $\ve>0$, there exists $n_0$
such that $\frac{x_{n+1}-x_n}{y_{n+1}-y_n}\geq l-\ve$ for all $n>n_0$.
Using this fact we get, for all $n>n_0$,
$$x_n-x_{n_0}\ge (y_n-y_{n_0})(l-\ve).$$
Thus
$$\liminf\limits_{n\to\infty} \frac{x_n}{y_n} \ge l-\ve.$$
Since this is true for any $\ve>0$, we get
$$\liminf_{n\to\infty} \frac{x_n}{y_n} \ge l.$$

The proof of the second part of this lemma is analogous.
\end{proof}

Using the well-known equation $\sum_{k\leq n} \frac 1k = \ln n + \gamma +
o\left(\frac1n\right)$
we get the alternative formulae for exponential densities
\begin{gather}\label{EQD3}
\ol\ve(A)=\limsup_{n\to\infty} \frac{\sum_{k\leq n} \frac1k}{\sum_{k\leq a_n} \frac 1k},\\
\ol\ve(A)=\liminf_{n\to\infty} \frac{\sum_{k\leq n} \frac1k}{\sum_{k\leq a_n} \frac 1k}.
\end{gather}

\begin{PROP}\label{PROPSTOLZMS}
Let $A=\{a_1<a_2<\ldots<a_n<\ldots\}$ and
\begin{gather*}
\alp{A}=\liminf_{n\to\infty} n\frac{g_n}{a_{n+1}},\\
\bet{A}=\limsup_{n\to\infty} n\frac{g_n}{a_{n}}.
\end{gather*}
Then
\begin{equation}\label{EQSTOLZMS}
    \frac 1{\bet{A}} \leq \ul\ve(A)\leq \ol\ve(A)\leq \frac1{\alp{A}}.
\end{equation}
$($We use the convention $\frac1{\infty}=0$ and
$\frac10=\infty$.$)$

In particular, if $\alp A=\bet A$, then $\ve(A)=\frac1{\alp A}=\frac1{\bet A}$.
\end{PROP}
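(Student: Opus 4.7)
The plan is to apply the generalized Stolz lemma (Lemma~\ref{LMSTOLZ}) to the harmonic-sum representation of the exponential densities recorded in~\eqref{EQD3}. Specifically, I would set
$$x_n=\sum_{k\le n}\tfrac1k,\qquad y_n=\sum_{k\le a_n}\tfrac1k.$$
These are positive sequences, and because $\post{a_n}n$ strictly increases in $\NN$, $\post{y_n}n$ is also strictly increasing and unbounded (it is comparable to $\log a_n$), so the hypotheses of Lemma~\ref{LMSTOLZ} are satisfied.

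The heart of the argument is a two-sided estimate of the Stolz quotient. One has $x_{n+1}-x_n=1/(n+1)$ and
$$y_{n+1}-y_n=\sum_{k=a_n+1}^{a_{n+1}}\tfrac1k,$$
a sum of exactly $g_n$ terms, each lying in $\intrv{1/a_{n+1}}{1/a_n}$. Therefore
$$\frac{g_n}{a_{n+1}}\le y_{n+1}-y_n\le \frac{g_n}{a_n},$$
which rearranges to
$$\frac{a_n}{(n+1)g_n}\le \frac{x_{n+1}-x_n}{y_{n+1}-y_n}\le \frac{a_{n+1}}{(n+1)g_n}.$$
Using $(n+1)/n\to 1$ together with the reciprocal identities $\liminf(1/f_n)=1/\limsup f_n$ and $\limsup(1/f_n)=1/\liminf f_n$ for positive $f_n$, these bounds deliver
$$\liminf_{n\to\infty}\frac{x_{n+1}-x_n}{y_{n+1}-y_n}\ge \frac1{\bet A},\qquad \limsup_{n\to\infty}\frac{x_{n+1}-x_n}{y_{n+1}-y_n}\le \frac1{\alp A}.$$

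Feeding these two estimates into Lemma~\ref{LMSTOLZ} yields
$$\frac1{\bet A}\le \liminf_{n\to\infty}\frac{x_n}{y_n}\le \limsup_{n\to\infty}\frac{x_n}{y_n}\le \frac1{\alp A},$$
and the harmonic-sum formulas~\eqref{EQD3} identify the two middle quantities with $\ul\ve(A)$ and $\ol\ve(A)$, producing~\eqref{EQSTOLZMS}; the ``in particular'' clause then follows by squeezing. I do not expect any real obstacle: the decisive step is passing to harmonic sums, so that $y_{n+1}-y_n$ is controlled \emph{exactly} by $g_n$, rather than via $\log(a_{n+1}/a_n)$, which would require a separate asymptotic approximation. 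The only minor point to watch is the conventions $\frac1\infty=0$ and $\frac10=\infty$ in the degenerate cases $\alp A=0$, $\bet A=\infty$, under which the claimed inequalities are trivially valid.
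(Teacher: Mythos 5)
Your proposal is correct and follows essentially the same route as the paper: the paper likewise applies Lemma~\ref{LMSTOLZ} to the harmonic-sum representation~\eqref{EQD3}, bounding the increment $\sum_{a_n<k\le a_{n+1}}\frac1k$ between $\frac{g_n}{a_{n+1}}$ and $\frac{g_n}{a_n}$ and then passing to $\liminf$/$\limsup$. Your write-up is, if anything, slightly more explicit about the reciprocal-limit step and the degenerate conventions.
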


\begin{proof}
Applying Lemma \ref{LMSTOLZ} to (\ref{EQD3}) we get
$$\limsup\frac{\sum_{k\leq n}\frac 1k}{\sum_{k\leq a_n}\frac 1k}\leq \limsup \frac{\frac1{n+1}}{\sum_{a_n<k\leq a_{n+1}}\frac1k}
\leq \limsup\frac{\frac1{n+1}}{\frac{a_{n+1}-a_n}{a_{n+1}}} = \limsup \frac1{n+1} \cdot
\frac1{\frac{g_n}{a_{n+1}}}.
$$
By a similar argument we get
$$\liminf_{n\to\infty}\frac{\sum_{k\leq n}\frac 1k}{\sum_{k\leq a_n}\frac 1k} \geq \liminf_{n\to\infty} \frac{\frac1{n+1}}{\sum_{a_n<k\leq a_{n+1}}\frac1k} \geq
\liminf_{n\to\infty}\frac{\frac1{n+1}}{\frac{a_{n+1}-a_n}{a_{n}}}
= \liminf_{n\to\infty}\frac1{n+1}\frac{1}{\frac{g_n}{a_{n}}}.
$$
\end{proof}

The above lemma yields
\begin{THM}\label{THMGREKOS}
Let $\alpha$, $\beta$ be real numbers with $1\leq\alpha\leq\beta$.
\begin{enumerate}
\enu
  \item If there exists a real sequences $\post {e_n}n$ tending to zero such that, for each $n\geq 1$,
    $$(1+e_n)\frac\alpha n \leq \frac{g_n}{a_{n+1}},$$
    then
    $$\ol\ve(A) \leq \frac 1\alpha.$$
  \item If there exists a real sequences $\post {f_n}n$ tending to zero such that, for each $n\geq 1$,
    $$\frac{g_n}{a_{n}}\leq \frac\beta n (1+f_n),$$
    then
    $$\frac1\beta \leq \ul\ve(A).$$
\end{enumerate}
\end{THM}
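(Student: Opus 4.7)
The plan is to derive Theorem \ref{THMGREKOS} as a direct corollary of Proposition \ref{PROPSTOLZMS}, by translating the pointwise hypotheses on $e_n, f_n \to 0$ into bounds on the asymptotic quantities $\alp{A}$ and $\bet{A}$.

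For part (i), I would start from the assumption $(1+e_n)\frac{\alpha}{n} \leq \frac{g_n}{a_{n+1}}$, multiply both sides by $n$, and rewrite it as $\alpha(1+e_n) \leq n\frac{g_n}{a_{n+1}}$. Taking $\liminf_{n\to\infty}$ of both sides and using that $e_n \to 0$, so that $\liminf \alpha(1+e_n) = \alpha$, this yields $\alpha \leq \alp{A}$. Since $\alpha \geq 1 > 0$, we may invert, and Proposition \ref{PROPSTOLZMS} gives $\ol\ve(A) \leq \frac{1}{\alp{A}} \leq \frac{1}{\alpha}$.

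For part (ii), the argument is symmetric. From $\frac{g_n}{a_n} \leq \frac{\beta}{n}(1+f_n)$ multiply by $n$ to get $n\frac{g_n}{a_n} \leq \beta(1+f_n)$, and take $\limsup_{n\to\infty}$ on both sides. Since $f_n \to 0$, this gives $\bet{A} \leq \beta$. Because $\beta \geq \alpha \geq 1 > 0$, inverting and applying Proposition \ref{PROPSTOLZMS} yields $\frac{1}{\beta} \leq \frac{1}{\bet{A}} \leq \ul\ve(A)$.

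There is essentially no obstacle here since Proposition \ref{PROPSTOLZMS} has already done the real work: the only thing that has to be checked is that the pointwise inequalities involving the null sequences $e_n, f_n$ pass to the appropriate one-sided limit. One small point to verify is the use of the convention $\frac{1}{0} = \infty$ and $\frac{1}{\infty} = 0$, which is harmless here because the assumptions $\alpha, \beta \geq 1$ keep us strictly away from the degenerate cases (the conclusions $\ol\ve(A) \leq 1$ and $\ul\ve(A) \geq 0$ are automatic otherwise).
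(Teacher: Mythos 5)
Your proposal is correct and matches the paper's own (implicit) argument: the paper introduces Theorem \ref{THMGREKOS} with the words ``The above lemma yields,'' meaning exactly the deduction you give, namely that the pointwise hypotheses force $\alpha\le\alp{A}$ and $\bet{A}\le\beta$, after which Proposition \ref{PROPSTOLZMS} (with the stated conventions for $\tfrac10$ and $\tfrac1\infty$) gives $\ol\ve(A)\le\tfrac1\alpha$ and $\ul\ve(A)\ge\tfrac1\beta$. Nothing further is needed.
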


\begin{EXA}
Proposition \ref{PROPSTOLZMS} can be used to compute the exponential density for some sets from the above examples.
By a straightforward computation we get the following values.
\begin{center}
\begin{tabular}{|l|c|c|}
  \hline
  $A=\{a_n; n\in\NN\}$ & $\alp A$ & $\bet A$ \\ \hline
  $A=\{\dcc{n^{\frac1a}}; n\in\NN\}$ & $\frac1a$ & $\frac1a$ \\ \hline
  $A=\{2^n;n\in\NN\}\cup\{2^{2^N}+1;N\in\NN\}$ & $0$ & $+\infty$ \\ \hline
  $A=\{n^2;n\in\NN, n \text{ is not square}\}$ & 2 & 4 \\\hline
  $a_n=\alpha b^n(1+o(1))$ & $+\infty$ & $+\infty$ \\ \hline
  $a_n=k+ln$ & $1$ & $1$ \\ \hline
  $a_n=k+ln+tn^2$ & $2$ & $2$ \\ \hline
  $a_n=tn^d(1+o(1))$ & $d$ & $d$ \\ \hline
\end{tabular}
\end{center}
We have shown in Example \ref{EXA2} that $\ve(A)=0$ for the set in the second row from this table.
It can be shown easily that $\ve(A)=\frac12$ for the set in the fourth row. These two examples show that
the inequalities in \eqref{EQSTOLZMS} can be strict.

In all other rows we have $\alp A=\bet A$, and the value of $\ve(A)$ can be computed using Proposition \ref{PROPSTOLZMS}.

The computation for the set in Example \ref{EXA3} is slightly more complicated.
We use the function $f(x)=\left(1+\frac1{\sqrt{x}}\right)^x$ again. Notice that \eqref{EQNOTES1} implies
that this function is increasing.

Using the mean value theorem and \eqref{EQNOTES1} we get
\begin{multline*}
n\frac{f(n+1)-f(n)}{f(n)}\ge
 n\frac{\min\{f'(c); c\in\intrv n{n+1}\}}{f(n)} \ge\\\ge
 n\frac{\min\{\frac{f(c)}{2(\sqrt{c}+1)}; c\in\intrv n{n+1}\}}{f(n)}
 \ge \frac{n}{f(n)} \cdot \frac{f(n)}{2(\sqrt{n+1}+1)}
\frac{n}{2(\sqrt{n+1}+1)}
\end{multline*}
and the last expression tends to $+\infty$ as $n\to\infty$.

We found out that $\alp A = \bet A = +\infty$. Thus
$\ve(A)=0$ by Proposition \ref{PROPSTOLZMS}.
\end{EXA}

\begin{REM}
If the set $A$ satisfies the hypotheses of Theorem \ref{THMGREKOS}
with $\alpha=\beta$, then $\ve(A)=\tau(A)=\frac1\alpha$. This
type of sets $A$ generalizes the sets considered in Example \ref{EXAA}
(sequences increasing arithmetically; $\alpha=\beta=1$) and in
Example \ref{EXAP} (sequences increasing polynomially;
$\alpha=\beta>1$).
\end{REM}

Now we state two refinements of Theorem
\ref{THMGREKOS}, Theorems \ref{THMGREKOS1} and \ref{THMGREKOS2}.

\begin{LM}\label{LMLOG}
For any $x\in\intrv0{\frac12}$ the inequality
$$-\ln(1-x)\le x+x^2$$
holds.
\end{LM}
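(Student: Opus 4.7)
The plan is to reduce the inequality to a one-variable monotonicity argument. Define
$$h(x) = x + x^2 + \ln(1-x),$$
which is well-defined and differentiable on $\intrv{0}{1/2}$. Then the claim $-\ln(1-x) \le x + x^2$ is equivalent to $h(x) \ge 0$ on this interval.

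First I would note the base value $h(0) = 0$, so by the mean value theorem it suffices to show $h'(x) \ge 0$ on $\intrv{0}{1/2}$. Computing the derivative,
$$h'(x) = 1 + 2x - \frac{1}{1-x} = \frac{(1+2x)(1-x) - 1}{1-x} = \frac{x(1-2x)}{1-x}.$$
For $x \in \intrv{0}{1/2}$ each of the factors $x$, $1-2x$, and $1-x$ is non-negative (with $1-x > 0$), so $h'(x) \ge 0$. Hence $h$ is non-decreasing on $\intrv{0}{1/2}$, and combining with $h(0) = 0$ we conclude $h(x) \ge 0$, which is the desired inequality.

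There is no serious obstacle here; the only small care needed is the algebraic simplification of $h'$, where the crucial cancellation of the constant term in the numerator leaves the factorable expression $x(1-2x)$ whose sign is exactly what pins down the endpoint $x = 1/2$ of the stated range. An alternative route via the Taylor expansion $-\ln(1-x) = \sum_{k\ge 1} x^k/k$ bounded by the geometric tail $x^2/(2(1-x))$ would also work and explains why $1/2$ appears, but the derivative argument is more direct and avoids infinite series.
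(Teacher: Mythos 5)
Your proof is correct and takes essentially the same approach as the paper, which simply says to study the function $f(x)=x+x^2-[-\ln(1-x)]$; you have supplied the routine details (the value $f(0)=0$ and the sign of $f'(x)=\frac{x(1-2x)}{1-x}$ on $\intrv{0}{\frac12}$) that the paper leaves to the reader.
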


\begin{proof}
By studying $f(x)=x+x^2-[-\ln(1-x)]$.
\end{proof}

\begin{THM}\label{THMGREKOS1}
Let $\beta$ be a real number, $\beta\geq1$.
If there exists a real sequence $\post {f_n}n$ tending to zero
such that, for each $n\geq 1$,
    $$\frac{g_n}{a_{n+1}}\leq \frac\beta n (1+f_n),$$
    then
    $$\ul\ve(A) \ge \frac1\beta.$$
\end{THM}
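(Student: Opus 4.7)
The plan is to convert the hypothesis on $g_n/a_{n+1}$ into an upper bound on $\log a_n$ of the form $\beta \log n + o(\log n)$, which would immediately yield $\liminf \frac{\log n}{\log a_n} \ge \frac{1}{\beta}$. The reason Lemma \ref{LMLOG} appears right before the theorem is telling: the assumption is on $g_n/a_{n+1}$ rather than on $g_n/a_n$, so unlike Theorem \ref{THMGREKOS}(ii) we cannot directly bound the ratio $a_{n+1}/a_n$ linearly, and we need the quadratic correction provided by Lemma \ref{LMLOG}.

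First I rewrite the assumption as
\[
\frac{a_n}{a_{n+1}} = 1 - \frac{g_n}{a_{n+1}} \ge 1 - \frac{\beta}{n}(1+f_n),
\]
so for all $n$ large enough (say $n \ge n_0$) that $\frac{\beta}{n}(1+f_n) \le \frac12$, Lemma \ref{LMLOG} gives
\[
\log \frac{a_{n+1}}{a_n} \le -\log\!\left(1 - \tfrac{\beta}{n}(1+f_n)\right) \le \frac{\beta}{n}(1+f_n) + \frac{\beta^2}{n^2}(1+f_n)^2.
\]
Next I telescope from $n_0$ to $n-1$:
\[
\log a_n \le \log a_{n_0} + \beta \sum_{k=n_0}^{n-1} \frac{1}{k} + \beta \sum_{k=n_0}^{n-1} \frac{f_k}{k} + \sum_{k=n_0}^{n-1} \frac{\beta^2 (1+f_k)^2}{k^2}.
\]
The last sum is bounded because $\sum 1/k^2$ converges and $f_k$ is bounded; the first sum is $\beta \log n + O(1)$ by the usual harmonic estimate $\sum_{k \le n} 1/k = \log n + \gamma + o(1)$ already cited in the paper.

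The one step that requires a small argument is the middle sum: I need $\sum_{k=n_0}^{n-1} \frac{f_k}{k} = o(\log n)$. Given $\varepsilon > 0$, choose $N \ge n_0$ so that $|f_k| < \varepsilon$ for $k \ge N$; then
\[
\left|\sum_{k=n_0}^{n-1} \frac{f_k}{k}\right| \le \sum_{k=n_0}^{N-1}\frac{|f_k|}{k} + \varepsilon \sum_{k=N}^{n-1}\frac{1}{k} \le C_N + \varepsilon(\log n + \gamma),
\]
so $\limsup_n \frac{1}{\log n}\sum_{k=n_0}^{n-1}\frac{f_k}{k} \le \varepsilon$, and letting $\varepsilon \to 0$ gives the claim.

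Combining everything, $\log a_n \le \beta \log n + o(\log n)$, hence
\[
\limsup_{n\to\infty} \frac{\log a_n}{\log n} \le \beta, \qquad \text{and so}\qquad \ul\ve(A) = \liminf_{n\to\infty} \frac{\log n}{\log a_n} \ge \frac{1}{\beta}.
\]
The hardest part is really just bookkeeping of the error terms, in particular the estimate that $\sum f_k/k$ is negligible compared to $\log n$; everything else is a direct consequence of Lemma \ref{LMLOG} plus a telescoping sum.
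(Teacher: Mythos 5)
Your proof is correct and follows essentially the same route as the paper's: invert the hypothesis to get $\frac{a_{n+1}}{a_n}\le\bigl(1-\frac{\beta}{n}(1+f_n)\bigr)^{-1}$, apply Lemma \ref{LMLOG}, telescope the logarithms, and use the harmonic-sum estimate. The only difference is bookkeeping: the paper absorbs the $f_i$ terms into a factor $(1+\delta)$ with $\delta$ chosen from $\ve$, whereas you isolate $\sum f_k/k$ and show it is $o(\log n)$ — the two treatments are interchangeable.
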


\begin{proof}
By the definition of $\ul\ve(A)$ it is sufficient to show that for each
$0<\ve<\frac1\beta$ there exists $n_0=n_0(\ve)$ such that for each $n\ge n_0$
$$\frac{\ln{n}}{\ln{a_n}} \ge \frac1\beta-\ve.$$
We will verify that
$$\ln a_{n+1} \le \frac{\ln{(n+1)}}{\frac1\beta-\ve}$$
for every $n$ large enough.

The hypothesis gives
\begin{gather*}
\frac{a_{n+1}-a_n}{a_{n+1}}\le\frac\beta{n}(1+f_n),\\
\frac{a_{n+1}}{a_n} \le \frac1{1-\frac\beta{n}(1+f_n)}.
\end{gather*}
Let us choose $n_1$ such that $\abs{f_n}<\frac12$, for each $n\ge n_1$. Then we get
$$\ln a_{n+1} = \ln a_{n_1} + \sum_{i=n_1}^n \ln\frac{a_{i+1}}{a_i} \le
\ln a_{n_1}-\sum_{i=n_1}^n \ln(1-\frac\beta{n}(1+f_n)).$$ Let
$n_2\ge n_1$ be such that $\abs{\frac\beta{n}(1+f_n)}\le\frac12$ whenever
$n\ge n_2$. Then we get
$$\ln a_{n+1} \le c_1 + \sum_{i=n_2}^n \frac\beta{i}(1+f_i)+
\sum_{i=n_2}^n \frac{\beta^2}{i^2}(1+f_i)^2.$$ Put
$\delta=\frac12\frac{\beta\ve}{1-\beta\ve}>0$ and choose $n_3\ge
n_2$ such that $\abs{f_i}\le\delta$, whenever $i\ge n_3$.

Then, for all $n\ge n_3$,
\begin{multline*}
\ln a_{n+1} \le c_2 + \sum_{i=n_3}^n \frac{\beta(1+\delta)}i +
\sum_{i=n_3}^n \frac{\beta^2(1+\delta)^2}{i^2} \le\\\le
c_3+\beta(1+\delta)\sum_{i=1}^n\frac1i \le
c_3+\beta(1+\delta)+\beta(1+\delta)\ln(n+1)
\end{multline*}
holds.

Obviously
$$\beta(1+\delta)<\frac1{\frac1\beta-\ve}$$
and therefore,
the right hand side of the above inequality is at most
$\frac{\ln(n+1)}{\frac1\beta-\ve}$ for every sufficiently large
$n$.
\end{proof}

\begin{THM}\label{THMGREKOS2}
Let $\alpha$ be a real number, $\alpha\ge1$. Suppose that for all
$n$
$$\frac{g_n}{a_n} \ge \frac\alpha{n}(1+e_n),$$
where $\limti n e_n=0$. Then $$\ol\ve(A)\le\frac1\alpha.$$
\end{THM}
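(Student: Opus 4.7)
The plan is to mirror the proof of Theorem \ref{THMGREKOS1}, with every inequality reversed so that the hypothesis feeds into a \emph{lower} bound on $\ln a_n$. First I would rewrite the hypothesis as $\frac{a_{n+1}}{a_n} \ge 1 + \frac{\alpha}{n}(1+e_n)$, take logarithms, and telescope to express $\ln a_{n+1}$ as $\ln a_{n_0}$ plus a sum of terms of the form $\ln\bigl(1+\frac{\alpha}{i}(1+e_i)\bigr)$.

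The single ingredient that needs replacing, compared with Theorem \ref{THMGREKOS1}, is Lemma \ref{LMLOG}: where there an upper bound for $-\ln(1-x)$ was used, here I need a \emph{lower} bound for $\ln(1+x)$, namely $\ln(1+x) \ge x - x^2$ for $x \in [0,\tfrac12]$, which is proved by the same type of single-variable calculation as Lemma \ref{LMLOG}. Applying this bound to $x_i = \frac{\alpha}{i}(1+e_i)$ (which is $\le \tfrac12$ once $i$ is large) splits the telescoped sum into a main term $\sum \frac{\alpha(1+e_i)}{i}$ and an error term dominated by $\sum \frac{\text{const}}{i^2}$, hence bounded. For the main term, by choosing $n_0$ so large that $|e_i|\le\delta$ for $i\ge n_0$ and using $\sum_{i\le n}\tfrac1i = \ln n + O(1)$, one gets $\ln a_{n+1} \ge \alpha(1-\delta)\ln(n+1) - C$ for all sufficiently large $n$.

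This inequality directly implies $\frac{\ln(n+1)}{\ln a_{n+1}} \le \frac{1}{\alpha(1-\delta)} + o(1)$, so $\ol\ve(A)\le\frac{1}{\alpha(1-\delta)}$, and letting $\delta\downarrow 0$ yields $\ol\ve(A)\le\frac{1}{\alpha}$. The main obstacle is really just securing the logarithmic inequality in the correct direction; once that analogue of Lemma \ref{LMLOG} is in hand, the structure of the argument is perfectly symmetric to the proof of Theorem \ref{THMGREKOS1}, and the $\sum 1/i^2$ remainder stays bounded, exactly as there.
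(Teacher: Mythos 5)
Your proposal is correct and follows essentially the same route as the paper's proof: rewrite the hypothesis as $\frac{a_{n+1}}{a_n}\ge 1+\frac{\alpha}{n}(1+e_n)$, telescope the logarithms, bound $\ln(1+x)$ from below by $x$ minus a quadratic error whose $\sum 1/i^2$ contribution stays bounded, and conclude $\ln a_{n+1}\ge \alpha(1-\delta)\ln(n+1)-C$. The only (immaterial) difference is that the paper invokes the standard inequality $\ln(1+x)\ge x-\frac{x^2}{2}$, valid for all $x\ge 0$, rather than formulating a separate analogue of Lemma \ref{LMLOG}.
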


Note that Proposition \ref{PROP3} can be deduced from the above theorem.

\begin{proof}
If $\alpha=1$ we are done. Suppose
that $\alpha>1$.

By the definition of $\ol\ve(A)$ it suffices to show that 
for each $\ve>0$ there is
an $n_0=n_0(\ve)$ such that
$$\frac{\ln n}{\ln a_n} \le \frac1\alpha+\ve,$$
whenever $n\ge n_0$. That is,
$\ln{n}\le\left(\frac1\alpha+\ve\right)\ln{a_n}$ or
equivalently $$\ln{a_n}\ge\frac{\ln{n}}{\frac1\alpha+\ve}.$$

Fix $\ve>0$.
We shall prove that for all $n$ sufficiently large
\begin{equation}\label{EQ1G1}
    \ln a_{n+1} \ge \frac{\ln(n+1)}{\frac1\alpha+\ve}.
\end{equation}
Let $\delta=\frac12\frac{\alpha\ve}{1+\alpha\ve}$. Obviously
$\delta<1$. Choose $n_1$ such that $\abs{e_n}\le\delta$ for
all $n\ge n_1$. The hypothesis of the theorem implies that for all $n$
$$\frac{a_{n+1}}{a_n} \ge 1+\frac\alpha{n}(1+e_n).$$
Then for $n\ge n_1$, we have
$$\ln a_{n+1} = \ln a_{n_1}+ \sum_{i=n_1}^n \ln\frac{a_{i+1}}{a_i} \ge
\ln a_{n_1}+ \sum_{i=n_1}^n
\ln\left(1+\frac\alpha{i}(1+e_i)\right),$$ and by the inequality
$$\ln(1+x)\ge x-\frac{x^2}2$$
(valid for $x\ge0$), we have
\begin{multline*}
\ln a_{n+1} \ge \ln a_{n_1} + \sum_{i=n_1}^n
\frac\alpha{i}(1+e_i) - \frac12 \sum_{i=n_1}^n
\frac{\alpha^2}{i^2}(1+e_i)^2 \ge\\\ge \ln a_{n_1} +
\alpha(1-\delta) \sum_{i=n_1}^n \frac1i -\frac12
(1+\delta)^2\alpha^2 \sum_{i=1}^\infty \frac1{i^2}.
\end{multline*}
Finally
$$\ln a_{n+1}\ge c_4 +\alpha(1-\delta)\sum_{i=1}^n\frac1i,$$
where $c_4$ does not depend on $n$. The last inequality implies
that
$$\ln a_{n+1} \ge c_4 + \alpha(1-\delta)\ln(n+1),$$
since $\sum_{i=1}^n\frac1i>\ln(n+1)$.

Now to deduce that \eqref{EQ1G1} is valid for all $n$ large
enough, it suffices to verify that
$\alpha(1-\delta)>\frac1{\frac1\alpha+\ve}$, which is
straightforward.
\end{proof}

Note that by Proposition \ref{PROP3} the study of exponential densities is non-trivial
only for sets $A$ such that $\liminf\limits_{n\to\infty}\frac{g_n}{a_n}=0$.
In view of this, the results stated in Theorems \ref{THMGREKOS1} and \ref{THMGREKOS2} are far from being complete since
only comparison of $\frac{g_n}{a_n}$ or $\frac{g_n}{a_{n+1}}$ to
sequences of the type \uv{constant times $\frac1n$} was
considered. Nevertheless these cases, motivated by Examples
\ref{EXAA} and \ref{EXAP}, are the most important ones as the following
observations show.

\textbf{Observation 1.} If $\frac{g_n}{a_n}$ is approximatively
$\frac1{n^\alpha}$ with $\alpha>1$, then $a_{n+1}$ is
approximatively $a_n(1+n^\alpha)$ and $a_n$ is approximatively
$c\prod\limits_{i=1}^n\left(1+\frac1{i^\alpha}\right)$. The
product $\prod\limits_{i=1}^\infty\left(1+\frac1{i^\alpha}\right)$
being convergent, we get that the set
$\{\dcc{c\prod\limits_{i=1}^n\left(1+\frac1{i^\alpha}\right)};n\in\NN\}$
is finite.

\textbf{Observation 2.} If $\frac{g_n}{a_n}$ is approximatively
$\frac1{n^\alpha}$ with $0<\alpha<1$, again $a_n$ would be close
to $u_n:=c\prod\limits_{i=1}^n\left(1+\frac1{i^\alpha}\right)$.
Then $\ln u_n \sim \frac{n^{1-\alpha}}{1-\alpha}$, so
that $\limti n \frac{\ln u_n}{\ln n} = +\infty$. In other words,
$A$ has zero exponential density.

\textbf{Acknowledgement:} A large part of this research was carried out during stays of the first named author
in Bratislava, on invitation by the Slovak University of Technology in Bratislava (Slovensk\'a Technick\'a
Univerzita v Bratislave) and the Slovak Academy of Sciences (Slovensk\'a Akad\'emia Vied).
The second author was supported by grants VEGA/1/0588/09 and UK/374/2009.
The third author was supported by grant VEGA/2/7138/27.

\end{document}